\xdef\csname v\a\endcsname {{\noexpand\mathbf{\a}}}
\newcommand{\vectorizeGR}[1]{\bm{#1}}
\newcommand{\vOmega}{\vectorizeGR{\Omega}}
\newcommand{\vlambda}{\vectorizeGR{\lambda}}
\DeclareMathOperator{\amin}{argmin}
\DeclareMathOperator{\amax}{argmax}
\DeclareMathOperator{\prox}{prox}
\DeclareMathOperator{\proj}{proj}
\def\lp{\left(}
\def\rp{\right)}
\def\lq{\left[}
\def\rq{\right]}
\def\la{\left\langle}
\def\ra{\right\rangle}
\newcommand{\Id}{\vI_{\rm {\bm d}}}
\newcommand{\argmin}[1]{\ensuremath{\underset{\substack{{#1}}}{\amin}}\,\,}
\newcommand{\argmax}[1]{\ensuremath{\underset{\substack{{#1}}}{\amax}}\,\,}
\newcommand{\kiter}[1]{#1^{k}}
\newcommand{\kkiter}[1]{#1^{k+1}}
\newcommand{\vwk}{\kiter{\vw}}
\newcommand{\vwkk}{\kkiter{\vw}}
\newcommand{\vxk}{\kiter{\vx}}
\newcommand{\vxkk}{\kkiter{\vx}}
\newcommand{\vlambdak}{\kiter{\vlambda}}
\newcommand{\vlambdakk}{\kkiter{\vlambda}}
\newcommand{\den}{D_{\bm{\vartheta}}}
\newcommand{\Qt}{Q_{\bm{\vartheta}}}
\newcommand{\rd}{\mathbb{R}}
\newcommand{\ds}{\displaystyle}
\newtheorem{prp}{Proposition}
\newtheorem{remark}{Remark}
\title{Plug and Play Splitting Techniques for Poisson Image Restoration}
\author[1,2]{Alessandro Benfenati}
\affil[1]{\small Environmental and Science Policy Department, University of Milan, Via Celoria 2, 20133 - Milan, Italy}
\affil[2]{Gruppo Nazionale Calcolo Scientifico, INDAM, Piazzale Aldo Moro, 5 00185 – Rome, Italy}
\date{}
\newcommand{\pnp}{PnPSplit$^{+}$}
\begin{document}
\maketitle

\begin{abstract}
	{Plug and Play (PnP) methods achieve remarkable results in the framework of image restoration problems for Gaussian data. Nonetheless, the theory available for the Gaussian case cannot be extended to the Poisson case, due to the non-Lipschitz gradient of the fidelity function, the Kullback-Leibler functional, or the absence of closed-form solution for the proximal operator of such term, leading to employ iterative solvers for the inner subproblem. In this work we extend the idea of PIDSPLIT+ algorithm, exploiting the Alternating Direction Method of Multipliers, to PnP scheme: this allows to provide a closed form solution for the deblurring step, with no need for iterative solvers. The convergence of the method is assured by employing a firmly non expansive denoiser. The proposed method, namely \pnp, is tested on different Poisson image restoration problems, showing remarkable performance even in presence of high noise level and severe blurring conditions.}
\end{abstract}

\section{Introduction}

Imaging problems {arise} in several scientific fields, such as Medicine \cite{a16060270,hansen2021computed}, Astronomy \cite{beltramo2020joint,dabbech2022first,benfenati2016deconvolution}, and Microscopy \cite{calisesi2022compressed,zunino2023reconstructing,10.1093/bioinformatics/btab808}. The mathematical model underlying the physics process is shared among all these disciplines \cite{bertero2021introduction}, and it reads as
\begin{equation}
	\label{eq:linMod}
	\vg = \mathcal{N}\lp\vH\,\vx^\star + b\rp, 
\end{equation}

where $\vx^\star\in\rd^n$ {denotes} the ground truth image, $\vH\in\rd^{m\times n}$ is a linear operator perturbing the data, $b\in\rd^+$ is a known background parameter,  $\vg\in\rd^m$ the recorded image and $\mathcal{N}$ denotes the statistical noise on recorded data. {Classical examples of noise model include Gaussian noise \cite{bertero2021introduction}, Salt\&Pepper noise \cite{zhong2021image}, Speckle noise \cite{cheng2024diffusion}, Poisson noise \cite{10.1088/2053-2563/aae109} and mixture of Poisson and Gaussian noise \cite{lanza2014image}.} The operator $\vH$ is also called Point Spread Function (PSF), since its representation is the registered image of a point source; classical hypotheses on $\vH$, abiding by real life systems properties, are that $\vH^\top\bm{1}=\bm{1}$ and $\sum_{ij}H_{ij}=1$. The aim of image restoration problems is to recover an estimation of $\vx^\star$ given the registered data $\vg$ and the operator $\vH$. When the recorded data $\vg$ is affected by Poisson noise, under a Bayesian framework, that is adopting a \emph{maximum a posteriori} approach \cite{10.1088/2053-2563/aae109, vardi1985statistical, geman1984stochastic}, one is led to solve the optimization problem
\begin{equation}
	\label{eq:varprob}
	\argmin{\vx\in\rd^n_+} KL(\vH\,\vx+b, \vg) + \beta\,R(\vx),
\end{equation}
where $KL$ is the generalized Kullback-Leibler functional
$$
{KL(\vH\,\vx+b, \vg) = \sum_i g_i\log\lp\frac{g_i}{\lp\vH\vx\rp_i+b}\rp + \lp\vH\vx\rp_i +b -g_i.}
$$
The operations are intended component wise, and one assumes $0\log(0)=0$. In particular, $KL(\cdot, \vg)$ is a proper, convex and differentiable functional. The function $R$ is the regularization functional and its role is to preserve the desired characteristics on the estimated solution, such as sharp edges or sparseness, and to control the influence of the noise on the estimated solution. Common choices for $R$ {includes} proper, lower semi-continuous (l.s.c.) convex function, such as $\ell_2$ regularization, which goes also under the name Tikhonov regularization \cite{tikhonov1963solution, golub11998tikhonov} or Ridge Regression in other  frameworks \cite{10.1371/journal.pone.0245376}, $\ell_1$ norm for promoting sparsity on the solution \cite{gapyak2025ell}, a convex combination of $\ell_2$ and $\ell_1$ norms, commonly referenced to as Elastic Net \cite{benfenati2020regularization}. Another popular choice is the Total Variation functional \cite{caselles2015total}, for promoting sharp edges, and its offsprings \cite{morotti2024space,landi2025scaled}. The parameter $\beta\in\rd^+$ is the regularization parameter and balances the trade--off between the $KL$ and $R$. A common requirement in imaging problems is that the solution components are non-negative, since they represents pixels' intensity: therefore, the estimated solution is required to belong to the non negative orthant $\rd^n_+$.

The literature presents a plethora of variational methods to solve this particular instance of restoration problem: among them one can find gradient approaches \cite{chambolle2016introduction} and the related variation \cite{crisci2023hybrid,Jianjun2018,savanier2022unmatched}, Bregman iterative methods \cite{li2023poisson,benfenati2013inexact,rossignol2022bregman}, proximal approaches \cite{chouzenoux2024variational}. The Alternating Direction Method of Multipliers (ADMM) has gained a predominant role in image restoration problem \cite{figueiredo2010restoration, boyd2011distributed,di2021directional}, showing particularly interesting results in managing optimization problems with linear constraints. 

The seminal work \cite{6737048} introduced a novel approach, called  Plug and Play (PnP) technique. This strategy consists of solving optimization problems, whose objective functional encompasses two terms. Employing splitting techniques as ADMM, the authors in \cite{6737048} observed that the update for one of the variables reads actually as a Gaussian denoising step: therefore, they propose to substitute such updating step with an off-the--shelf denoiser $D$, such as Block-Matching and 3D Filtering (BM3D) \cite{dabov2007image}, Nonlocal Mean Filter (NLM) \cite{buades2011non}. Modern approaches encompass also the usage of deep neural networks, tailored for Gaussian denoising \cite{7839189}. 

The main hypothesis is that such denoiser is the proximal operator of some function $R$: the numerical experience showed the remarkable results of this approach. The research interest then moved to {investigate the} theoretical hypothesis to have on the denoiser for assuring the convergence of PnP: indeed, fixed point theory tells us that such denoiser needs to be firmly non expansive \cite{ryu2019plug,9054731}, but unfortunately most of the employed denoisers do not {fulfil} this requirement \cite{combettes2020deep}, despite their impressive performance results. Even classical neural networks, that show remarkable performances in Gaussian denoising tasks, cannot satisfy this requirement, unless properly trained with tailored loss function \cite{doi:10.1137/20M1387961}. The scientific research explored the control of the Lipschitz constant of the neural network \cite{ducotterd2024improving, hertrich2021convolutional,yoshida2017spectralnormregularizationimproving}, but the quality of such control is not strong enough to ensure the convergence property and moreover the computational cost is rather high. In \cite{jimaging10020050} the PnP framework has been addressed by considering it as a constrained problem under an ADMM approach, where a discrepancy principle is used in reformulating the problem. This approach allows to automatically chose the regularization parameter. Different techniques have been explored to assure convergence of PnP method: bounded denoisers assure fixed point convergence \cite{7744574}; in \cite{9473005} an incremental version of PnP with explicit requirements on the denoiser, namely its firmly non expansiveness, assures the convergence while maintaining scalability in terms of speed and memory. In \cite{9380942} under the hypotheses of the denoiser being averaged and the convexity of the data fidelity term the PnP scheme converges, and moreover it is shown that some of the employed denoisers are indeed the proximal operator of particular functions, \emph{e.g.}, the NLM is the prox of a quadratic convex function. 

One has to mention alternative approaches to PnP, which try to address the theoretical issues posed by PnP. The Regularizaton by Denoising (RED) method \cite{romano2017little} is among them, it tries to overcome the PnP limitations by requiring the denoiser to have a symmetric Jacobian and to be locally homogeneous: unfortunately, although the theoretical framework is very rich and interesting, the majority of the employed denoiser do not satisfy this requirements. RED has been then investigated from different points of view: it has been reformulated  \cite{cohen2021regularization} as a  constrained optimization problem (RED-PRO), where  the least square minimum is projected on the fixed-point sets of demicontractive denoisers, which reveal to be convex sets. In \cite{10415495} the RED-PRO has been reversed following a discrepancy principle, leading to a constrained RED approach (CRED): the RED functional is minimized under the discrepancy between the recovered solution and the data $\vg$. Deep equilibrium models have been recently studied for addressing Poisson image restoration \cite{daniele2025deepequilibriummodelspoisson}.

A further step was done considering Gradient Step Denoisers \cite{hurault2022gradient}, where the denoising step is carried out by subtracting to the current image the gradient of a parametrized function $g_\vartheta$ : a classical and performant choice is $g_\vartheta(\vx) = 1/2\|\vx-n(\vx)\|_2^2$, where $n$ is a denoising neural network. This particular strategy allows for a more solid theoretical convergence property and, from the practical point of view, it is possible to learn the denoiser without compromising the numerical performance.

Most of the previous research on PnP methods focused on data corrupted by Gaussian noise. Image corrupted by Poisson noise presents different challenges, mainly for the presence of the Kullback Liebler divergence as part of the objective functional. The seminal work \cite{hurault2023convergent} adopt a Bregman approach for designing a tailored method for deblurring and denoising tasks in presence of Poisson noise: the remarkable numerical results are supported by solid theoretical result. Adopting a different strategy, in \cite{9616253} a novel denoisier is created for Poisson data employing a denoiser based on Schroedinger equation's solution from quantum physics. An ADMM approach is adopted in \cite{ROND201696}, showing reliable results also in presence of high level Poisson noise. Beside variational methods, the authors in \cite{klatzer2025efficientbayesiancomputationusing} explore Bayesian approaches, in particular Langevin approaches, for addressing image restoration for Poisson data.

The variational methods previously mentioned show remarkable results in term of reconstruction, both in denoising and deblurring tasks, and rely on solid theoretical basis. Nonetheless, all of them rely on iterative methods for solving the deblurring step, meaning that either one has to accept an inexact solution to the inner problem or wait for the convergence of the inner iterative procedure. In this work, instead,  the split Bregman approach presented in \cite{Setzer2012} is exploited, \emph{i.e.}, coupling it with the PnP idea of substituting the proximity operator with an off-the--shelf denoiser, chosen to satisfy the firmly non expansive property. The split Bregman technique allows to avoid the usage of iterative methods for the deblurring step, by solving a trivial Least Square minimization problem, which possesses the nice property of having an unique solution. This significantly reduces the computational cost and, indirectly, the computational time. When the chosen denoiser satisfy the firmly non expansiveness hypothesis, one can extend the theoretical result of \cite{Setzer2012} for proving the convergence of the proposed scheme. The proposed method is then compared with state of the art algorithms and tested under different blurring conditions and Poisson noise levels.

This work is organized as follows. \cref{sec:PnPMethods} initially provides a background on ADMM and on Plug and Play methods, providing convergence results for the former and setting the notations used throughout the work. \cref{sec:pnpsplitp} presents the proposed method, providing the convergence result. \cref{sec:numexp} assesses the performance of the proposed method, comparing it with state of the art algorithms, testing under extreme {perturbation} conditions {and under different blurring operator} and, finally, employing a denoiser which does not satisfy the theoretical requirements for convergence. Eventually, \cref{sec:conclusions} draws the final considerations and consider possible future extensions of this work.

\paragraph{Notation.} The set $\rd^n$ denotes the real vector space of dimension $n$, $\rd^{m\times n}$ denotes real matrices with $m$ rows and $n$ columns. Bold capital symbols ($\vA,\vOmega,\dots$) denotes matrices, bold small symbols ($\vx,\vlambda,\dots$) denotes vectors. {For a vector $\vx\in\rd^n$, $\vx\geq\bm{0}$ means that each element of $\vx$ is greater or equal to zero. {The set $\rd^n_+$ is the non negative orthant of $\rd^n$: $\rd^n_+ = \{\vx\in\rd^n| \vx\geq 0\}$}.} Italic and Greek letters denote scalars in $\rd$. $\|\cdot\|_p$ stands for the $\ell_p$ norm. $\proj_A$ denotes the projection onto the set $A$. The set $\Gamma_0$ denotes the set of convex, proper and lower semi continuous (l.s.c.) functions. {The indicator function of a set $C$ is denoted with $\iota_C(\vx)$, where}
$$
\iota_C(\vx) = \begin{cases}
	0 & \text{if}\, \vx\in C\\
	+\infty & \text{otherwise}
\end{cases}
$$The proximity operator {(called also proximal operator or just prox)} of a function $f$ at a point $\vc$ is denoted with $\prox_f(\vc)$, and it consists of
$$
\prox_f(\vc) = \argmin{\vx}f(\vx)+\frac12\|\vx-\vc\|_2^2.
$$

\section{Plug and Play Methods}
\label{sec:PnPMethods}
{Splitting methods address a general minimization problem of the form}
\begin{equation}
	\label{eq:varProb}
	\argmin{\vx} \psi(\vx) + \beta\varphi(\vM \vx)
\end{equation}
where $\varphi,\psi\in\Gamma_0$, with $\varphi$ being differentiable, and $\vM$ is a linear operator. Note that Problem \eqref{eq:varprob} can be cast in this form by setting $\psi\equiv \beta\,R,\, \varphi\equiv \beta^{-1}\,KL$ and $\vM=\vH$. 
{Introducing} $\vM\vx=\vw$, the problem can be recast as
$$
\argmin{\vx,\vw} \psi(\vx) + \beta\varphi(\vw), \quad \text{such that } \vM\vx=\vw.
$$ 
The new constraint can be embedded in the objective functional, leading to the Augmented Lagrangian:
$$
\mathcal{L}(\vx,\vw,\vlambda) = \psi(\vx) + \beta\varphi(\vw) + \frac{1}{2\gamma}\|\vM\vx-\vw+\vlambda\|_2^2-\frac{1}{2\gamma}\|\vlambda\|_2^2,
$$
where the substitution $\vlambda\gets \gamma\vlambda$ has been {made}, with a {slight} abuse of notation. This leads to solve the saddle point problem
\begin{equation}
	\label{eq:saddlepoint}
	\argmin{\vx,\vw}\argmax{\vlambda }\mathcal{L}(\vx,\vw,\vlambda).
\end{equation}
The popular Alternating Direction Method of Multipliers (ADMM) \cite{boyd2011distributed}  depicted in \cref{al:ADMM} allows {solving} \eqref{eq:saddlepoint} under suitable hypotheses.
\begin{algorithm}[ht]
	\caption{ADMM}
	\label{al:ADMM}
	\begin{algorithmic}
		\State{Set $\vx^0, \vw^0$ and $\vlambda^0$ accordingly, select the parameter $\gamma>0$.}
		\For{$k=0,1,\dots$}
		\State{$\vxkk = \argmin{\vx\geq \bm{0}} \psi(\vx) +\ds\frac{1}{2\gamma}\|\vM\vx-\vwk +\vlambdak\|_2^2$}
		\State{}
		\State{$ \vwkk = \argmin{\vw} \beta\varphi(\vw) + \ds\frac{1}{2\gamma}\|\vM\vxkk-\vw +\vlambdak\|_2^2 $}
		\State{}
		\State{$ \vlambdakk =\vlambdak +\vM\vxkk-\vwkk $}
		\EndFor
	\end{algorithmic}
\end{algorithm}

\begin{remark}
	The update step of the variable $\vw$ consists of the proximity operator of the function $\varphi$ computed at $\vM\vxkk+\vlambdak$.
\end{remark}

The following result \cite[Proposition 2.2]{Setzer2012} provides the convergence results for the sequences $\{\vlambdak\}_k$ and $\{\vwk\}_k$, and assess the requirements to be met for having the iterates $\{\vxk\}_k$ solve the primal problem \eqref{eq:varProb}.

\begin{prp}[\cite{Setzer2012}]
	\label{prp:convergence} For any starting point and for any $\gamma\in\rd^+$ the sequences $\{\vlambdak\}_k$ and $\{\vwk\}_k$ generated by \cref{al:ADMM} converge. The sequence $\{\vxk\}_k$ calculated by \cref{al:ADMM} converges to a solution of the primal problem \eqref{eq:varProb} if one of the following conditions
	is met:
	\begin{enumerate}
		\item The primal problem has one and only one solution
		\item The optimization problem
		$$
		\argmin{\vx} \psi(\vx) + \frac{1}{2\gamma}\|\vM\vx-\hat\vw +\hat\vlambda \|_2^2
		$$
		has a unique solution, where 
		$$
		\hat\vw = \lim_{k\to\infty} \vwk, \quad \hat\vlambda = \lim_{k\to\infty} \vlambdak
		$$
	\end{enumerate}
\end{prp}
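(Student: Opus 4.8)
The plan is to reduce the analysis of \cref{al:ADMM} to the convergence theory of the Douglas--Rachford splitting (DRS), exploiting the equivalence between ADMM applied to the primal problem \eqref{eq:varProb} and DRS applied to its Fenchel dual. First I would pass to the split formulation with the constraint $\vM\vx=\vw$ and form the dual, whose objective splits as $\psi^*(-\vM^\top\vlambda)+(\beta\varphi)^*(\vlambda)$, a sum of two convex terms associated with two maximal monotone subdifferential operators. The aim is to verify, by matching the optimality conditions of the two $\argmin$ steps of \cref{al:ADMM} against the resolvents of these operators, that the multiplier sequence $\{\vlambdak\}_k$ together with an auxiliary sequence $\{\vz^k\}_k$ is generated exactly by the Douglas--Rachford operator $T=\tfrac12(\vI+R_1R_2)$, where $R_i=2\prox-\vI$ are the reflected resolvents. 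This identification is the conceptual core and must be handled carefully with respect to the sign and scaling conventions introduced by the substitution $\vlambda\gets\gamma\vlambda$ in the Augmented Lagrangian.

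Once the equivalence is established, convergence of the dual sequences follows from standard operator theory: the operator $T$ is firmly nonexpansive, hence averaged, and its fixed-point set is nonempty whenever the dual problem is solvable, which holds under a constraint qualification guaranteeing strong duality and dual attainment. By the Krasnosel'skii--Mann theorem for averaged operators the iterate $\{\vz^k\}_k$ converges, strongly since the ambient space is finite dimensional, to a fixed point $\hat\vz$. As $\vlambdak$ and $\vwk$ are recovered from $\vz^k$ through resolvent (proximity) maps, which are continuous, both sequences converge, say $\vlambdak\to\hat\vlambda$ and $\vwk\to\hat\vw$; moreover the update $\vlambdakk=\vlambdak+\vM\vxkk-\vwkk$ forces the primal residual $\vM\vxkk-\vwkk\to\bm{0}$. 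This yields the unconditional convergence of $\{\vlambdak\}_k$ and $\{\vwk\}_k$ asserted in the first part of the statement.

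The delicate part is the primal sequence $\{\vxk\}_k$, and I expect this to be the main obstacle. Passing to the limit in the optimality condition of the $\vx$-update shows that every cluster point $\hat\vx$ of $\{\vxk\}_k$ minimizes $\psi(\vx)+\tfrac{1}{2\gamma}\|\vM\vx-\hat\vw+\hat\vlambda\|_2^2$ over $\vx\geq\bm{0}$ and, through $\vM\hat\vx=\hat\vw$ and the limiting KKT relations, solves the primal problem \eqref{eq:varProb}. The difficulty is that $\vM$ need not be injective, so neither the primal problem nor the limiting quadratic subproblem need possess a unique minimizer; the DRS machinery controls only the dual iterate and the image $\vM\vxk$, not $\vxk$ itself, so without extra regularity the full primal sequence may fail to converge. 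This is precisely where the two listed hypotheses enter: uniqueness of the primal solution (condition~1) or uniqueness of the minimizer of the limiting subproblem (condition~2) reduces the set of cluster points to a single point and thereby upgrades subsequential convergence to convergence of the whole sequence $\{\vxk\}_k$ to that solution.
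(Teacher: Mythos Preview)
The paper does not supply its own proof of \cref{prp:convergence}; the result is quoted verbatim as \cite[Proposition~2.2]{Setzer2012} and stated without argument, so there is nothing in the present text to compare against. Your outline is correct and is exactly the classical route that reference (and the Gabay / Eckstein--Bertsekas lineage it builds on) follows: identify ADMM on the primal with Douglas--Rachford splitting on the Fenchel dual, use firm nonexpansiveness of the DRS operator together with Krasnosel'skii--Mann to obtain (strong, in finite dimensions) convergence of the fixed-point iterate, recover convergence of $\{\vlambdak\}_k$ and $\{\vwk\}_k$ through continuity of the resolvent maps, and then invoke either uniqueness hypothesis to pin down the primal sequence $\{\vxk\}_k$, whose convergence the dual machinery alone cannot guarantee when $\vM$ is not injective.
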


The seminal work \cite{6737048} observed that the update rule for $\vw$ in \cref{al:ADMM} can be interpreted as a Gaussian denoising step on the variable $\vw$, with a regularization function $\varphi$. Therefore, they proposed to plug in an off-the-shelf Gaussian denoiser $\mathcal{D}_{\gamma\beta}$ instead of the proximal step, where $\gamma\beta$ is the standard deviation of the Gaussian noise to be removed. The method takes the name of Plug and Play (PnP) and it is depicted, in its general formulation, in \cref{al:Pnp}. Some examples {for the denoiser used in PnP schemes} are BM3D \cite{dabov2007image} or Nonlocal Mean Filter \cite{buades2011non}, or trained deep neural networks \cite{doi:10.1137/20M1387961}. The advantage of this strategy is twofold: one does not need to select a priori  a regularization function $\varphi$ and furthermore, once chosen, one can avoid to compute the proximal operator of $\varphi$, via a direct formula-as in the $\ell_1$ case-or via an iterative method, \emph{e.g.}, when $\varphi$ is the Total Variation regularization.
\begin{algorithm}[ht]
	\caption{Plug and Play}
	\label{al:Pnp}
	\begin{algorithmic}
		\State{Set $\vx^0, \vw^0$ and $\vlambda^0$ accordingly, select the parameter $\gamma>0$.}
		\For{$k=0,1,\dots$}
		\State{$\vxkk = \argmin{\vx\geq \bm{0}} \psi(\vx) +\ds\frac{1}{2\gamma}\|\vM\vx-\vwk +\vlambdak\|_2^2$}
		\State{}
		\State{$ \vwkk = \mathcal{D}_{\gamma\beta}(\vM\vxkk+\vlambdak)$}
		\State{}
		\State{$ \vlambdakk =\vlambdak +\vM\vxkk-\vwkk $}
		\EndFor
	\end{algorithmic}
\end{algorithm}
This strategy proved to achieve remarkable results in terms of reconstruction quality and computational time: the numerical experience \cite{6737048,cascarano2022plug,pendu2023preconditioned} showed that this method is able to exploit both the properties of the original variational model and the noise-removal abilities of the chosen denoiser. 

{Nonetheless, such an approach does not come without presenting several challenges. Consider the case in which the chosen denoiser is the prox operator of an \textit{unknown} function: this amounts to \emph{implicitly} defining the primal problem to solve and the relative objective function, the latter consisting of the data fidelity $\psi$ (selected according to the noise affecting the data) and of a regularization function $\rho$ such that $\mathcal{D}(\cdot)=\prox_\rho(\cdot)$. The implicit optimization problem is therefore
	$$
	\argmin{\vx} \psi(\vx) + \rho(\vM\vx).
	$$	
	For example, if the denoiser is the soft thresholding operator, then it is a common knowledge that the function $\rho$ is the $\ell_1$ norm; it can be proven that a class of linear denoisers, such as kernel and symmetric denoisers, can be expressed as proximity operators of some convex functions \cite{9418516}}. 

{On the other hand, if the chosen denoiser $\mathcal{D}$ does not meet the requirements to be the proximal operator of a function, then the convergence of PnP method is no longer related to the primal problem, the focus is on consensus equilibrium formulation: this amounts to have 
	$$
	\tilde\vx = \prox_{\psi}(\tilde\vx-\tilde\vlambda),\quad \text{and} \quad \tilde\vx = \mathcal{D}(\tilde\vx+\tilde\vlambda),
	$$
	where $\tilde\vx$ is the restored solution and $\tilde\vlambda$ can be interpreted as the noise component removed from the data. For a more thoughtful discussion, see \cite{10004791}.}

{This work focuses on the latter case: select and/or train a denoiser $\mathcal{D}$ which is a firmly non expansive operator, and it is therefore the resolvent of a maximally monotone operator \cite{9054731,ryu2019plug}, aiming to exploit classical convergence results on ADMM.}

\section{Proposed Method}
\label{sec:pnpsplitp}
{In the Poisson case, PnP methods typically require an iterative solver for the inner deblurring subproblem, see for example \cite{hurault2023convergent,9616253,ROND201696}. This work exploits a clever strategy originally introduced in \cite{figueiredo2010restoration,Setzer2012}, which allows one to avoid such iterative procedures. Such strategy is then combined with the PnP approach, and, by  carefully choosing the denoiser, the convergence behaviour of the proposed method is ensured.}

\subsection{Previous Work: PIDSPLIT+}
The authors in \cite{Setzer2012} generalized the method proposed in \cite{figueiredo2010restoration}, using a common but clever mathematical trick: adding 0 to the objective functional, which in this case amounts to the scalar product of $\vx$ and the zero vector. The optimization problem \eqref{eq:varprob} is slightly modified by adding the term $\la\vx,\bm{0}\ra$ and by {introducing} the indicator function ${\iota_{\rd^n_+}}$:
\begin{equation}
	\label{eq:VarProbSetzer}
	\argmin{\vx} \la\vx,\bm{0}\ra + KL(\vH\,\vx+b, \vg) + \beta\,R(\vx) + {\iota_{\rd^n_+}(\vx)}.
\end{equation}
Introducing the matrix $ \vM= \lp\vH^\top, \Id, \Id\rp^\top$ the problem \eqref{eq:VarProbSetzer} can be restated as
\begin{eqnarray*}
	\label{eq:augLagr}
	\argmin{\vx,\vw} \la\vx,\bm{0}\ra + \varphi(\vw),\quad \text{s.t.}\quad \vM\,\vx = \vw\Leftrightarrow\begin{pmatrix}
		\vH\\\Id\\\Id
	\end{pmatrix}\vx = \begin{pmatrix}
		\vw_1-b\\\vw_2\\\vw_3
	\end{pmatrix}
\end{eqnarray*}
which abides to the formulation in \eqref{eq:varProb} with
$$
\varphi(\vw) = KL(\vw_1,\vg) + \beta\,R(\vw_2) + {\iota_{\rd^n_+}}(\vw_3), \quad \psi(\vx) = \la\vx,\bm{0}\ra.
$$
This can be easily generalized when the {regularization} function $R$ encompasses a linear operator $\vL$, as $R(\vL\vx)$: the matrix $\vM$ reads hence as $\vM^\top=\lp\vH^\top, \vL^\top, \Id\rp^\top$. The natural next step is to apply \cref{al:ADMM} to this problem. In particular, the update step for $\vxkk$ reads as
$$
\vxkk = \argmin{\vx}\la\vx,\bm{0}\ra + \frac{1}{2\gamma}\|\vM\vx-\vwk+\vlambdak\|_2^2
$$
which amounts to {solving}
$$
\lp\vH^\top\vH + 2\Id\rp\vx= \vH^\top\lp\vwk-\vlambdak\rp
$$
The system matrix is square and non singular: therefore, it has one and only one solution: this leads to satisfy condition ii) of \cref{prp:convergence}, and therefore the whole method converges. Moreover, assuming the usual hypotheses on the PSF $\vH$, the solution of such system can be easily computed by means of FFT.

Due to the separability of the components of the vector $\vw$, the update for $\vwkk$ is straightforward:
\begin{itemize}
	\item The component $\vwk_1$ is computed as the proximal operator of the Kullback--Leibler functional $KL(\cdot,\vg)$:
	\begin{eqnarray*}
		\vwkk_1 &=& \argmin{\vw_1} KL(\vw_1,\vg) + \frac{1}{2\gamma}\|{\vH}\,\vxkk  -\vw_1 +\vlambdak_1\|_2^2\\
		&=& \prox_{\gamma\,KL(\cdot+b,\vg)}\lp\vH\vxkk+\vlambdak_1\rp\\
		&=& \frac12\lp\ \vH\vxkk+b+\vlambdak_1-\gamma +\sqrt{\lp\vH\vxkk+b+\vlambdak_1-\gamma\rp^2+4\gamma\,\vg}\rp,
	\end{eqnarray*}
	where the operations are component-wise.
	
	\item The component $\vw_2$ {is given by} the proximity operator of the regularization function:
	\begin{eqnarray*}
		\vwkk_2 &=& \argmin{\vw_2} \beta\,R(\vw_2) + \frac{1}{2\gamma}\|\vxkk-\vw_2+\vlambdak_2\|_2^2\\
		&=& \prox_{\beta\gamma R} \lp\vxkk+\vlambdak_2\rp
	\end{eqnarray*}
	
	\item The third element of $\vw$ is the projection on the non-negative orthant:
	\begin{eqnarray*}
		\vwkk_3 &=& \argmin{\vw_3} {\iota_{\rd^n_+}(\vw_3)} + \frac{1}{2\gamma}\|\vxkk-\vw_3+\vlambdak_3\|_2^2\\
		&=& \proj_{{\rd^n_+}} \lp\vxkk+\vlambdak_3\rp
	\end{eqnarray*}
	
\end{itemize}
These steps are gathered in \cref{al:pidsplit+}, together with the final update{s} of the Lagrangian multipliers ({which are not listed} one by one for the sake of brevity). 
\begin{algorithm}[ht]
	\caption{PIDSPLIT+\cite{Setzer2012}}
	\label{al:pidsplit+}
	\begin{algorithmic}
		\State{Set $\vx^0, \vw^0$ and accordingly $\vlambda^0$; select the parameter $\gamma>0$.}
		\For{$k=0,1,\dots$}
		\State{$\vxkk = \lp\vH^\top\vH + 2\Id\rp^{-1}\lq\vH^\top\lp\vwk_1-\vlambdak_1\rp + \vwk_2-\vlambdak_2 + \vwk_3-\vlambda_3^k\rq$}
		\State{}
		\State{$ \vwkk_1 =\ds\frac12\lp\ \vH\vxkk+b+\vlambdak_1-\gamma +\sqrt{\lp\vH\vxkk+b+\vlambdak_1-\gamma\rp^2+4\gamma\,\vg}\rp$}
		\State{}
		\State{$\vwkk_2 = \prox_{\gamma R} \lp\vxkk+\vlambdak_2\rp$}
		\State{}
		\State{$\vwkk_3 = \proj_{{\rd^n_+}} \lp\vxkk+\vlambdak_3\rp$}
		\State{}
		\State{$ \vlambdakk =\vlambdak +\vM\vxkk-\vwkk $}
		\EndFor
	\end{algorithmic}
\end{algorithm}

\subsection{\pnp}
{With the aim of adopting a PnP approach}, the update rule for $\vw_2$ {takes again the form} of a Gaussian denoising step: therefore, following the original PnP idea, one employs a Gaussian denoiser $\mathcal{D}_{\beta\gamma}$ in place of the proximal operator of $R$. This choice leads to a novel version of this splitting algorithm, {called} \pnp, which exploits the splitting idea of \cite{Setzer2012} and the possibility to select an off-the-shelf denoiser, instead of meticulously selecting a regularization function $R$ and devising tailored algorithm for computing its proximity operator.
\begin{algorithm}[htbp]
	\caption{\pnp}
	\label{al:pnpsplit+}
	\begin{algorithmic}
		\State{Set $\vx^0, \vw^0$ and accordingly $\vlambda^0$; select the parameter $\gamma>0$.}
		\For{$k=0,1,\dots$}
		\State{$\vxkk = \lp\vH^\top\vH + 2\Id\rp^{-1}\lq\vH^\top\lp\vwk_1-\vlambdak_1\rp + \vwk_2-\vlambdak_2 + \vwk_3-\vlambda_3^k\rq$}
		\State{}
		\State{$ \vwkk_1 =\ds\frac12\lp\ \vH\vxkk+b+\vlambdak_1-\gamma +\sqrt{\lp\vH\vxkk+b+\vlambdak_1-\gamma\rp^2+4\gamma\,\vg}\rp,$}
		\State{}
		\State{$\vwkk_2 = \mathcal{D}_{\beta\gamma}\lp\vxkk+\vlambdak_2\rp$}
		\State{}
		\State{$\vwkk_3 = \proj_{{\rd^n_+}} \lp\vxkk+\vlambdak_3\rp$}
		\State{}
		\State{$ \vlambdakk =\vlambdak +\vM\vxkk-\vwkk $}
		\EndFor
	\end{algorithmic}
\end{algorithm}
The main advantage of this approach is that the deblurring step is computed with a direct explicit formula, without relying on an iterative solver, reducing significantly the computational cost and time.

The denoiser, however, should be properly trained (or selected) in order to {ensure} the convergence behavior of \pnp algorithm: this requires that such denoiser is firmly non expansive \cite{9054731}, as already {discussed} in \cref{sec:PnPMethods}. If the selected denoiser is a  convolutional neural network, the latter network can be trained in order to satisfy this requirement, as presented in \cite{doi:10.1137/20M1387961}. The strategy to train such a denoiser is briefly recalled below.

Consider the differential operator  $\Qt = 2\den-\Id$, where $\bm{\vartheta}$ are the trainable parameters: classical results state that the denoiser $\den$ is firmly non expansive if and only if $\Qt$ is non expansive: therefore the training of $\den$ should be carried out by solving
$$
\argmin{\bm\vartheta} \sum_i L(\den(\vx_i),\vy_i) \quad \text{such that } \Qt \text{ is non expansive},
$$ 
where $\{\vx_i,\vy_i\}_i$ is the dataset of noisy and clean images for the training and $L$ is the loss function, usually  MSE score, used for training. The authors in \cite{doi:10.1137/20M1387961} assume that $\Qt$ is differentiable for any $\bm{\vartheta}$, therefore the requirement for the non expansiveness amounts to 
$$
\|\nabla \Qt(\vx)\|\leq 1 \quad\forall \, \vx.
$$
Unfortunately, this cannot be met for each $\vx$, hence in \cite{doi:10.1137/20M1387961} this constraint is imposed on every line $[\vx_i, \den(\vx_i)]$, \emph{i.e.}, on each point of the form $\tilde\vx_i= \delta_i\vx_i + (1-\delta_i)\den(\vx_i)$, with $\delta_i$ randomly drawn from an Uniform distribution on the interval $[0,1]$. The training phase for the denoiser reads hence as
\begin{equation}
	\argmin{\bm\vartheta} \sum_i L(\den(\vx_i),\vy_i) + \lambda \max\{\|\nabla \Qt(\tilde\vx_i)\|^2, 1-\varepsilon\},
	\label{eq:trainingPesquet}
\end{equation}
where $\lambda$ is a nonnegative regularization parameter and $\varepsilon\in(0,1)$ allows to control the constraints. The requirement on $\den$ to be differentiable can be overcome: automatic differentiation, the standard technique used in neural network training, allows one to consider denoisers implementing nonsmooth activation function such as ReLU (see \cite[Remark 3.3]{doi:10.1137/20M1387961} and \cite{bolte2021conservative,NEURIPS2021_70afbf22,NEURIPS2022_a9077da4,doi:10.1137/22M1541630} for more theoretical insights).

\begin{remark}
	\label{rem:spectralnorm}
	{The training in \eqref{eq:trainingPesquet} relies on computing the spectral norm $\|\nabla \Qt(\vx)\|$ for an image $\vx$: as explained in \cite[Remark 3.4]{doi:10.1137/20M1387961} this is accomplished by using the power iterative method and backpropagation for computing the Jacobian.}
\end{remark}

The convergence result for \cref{al:pnpsplit+} follows from \cref{prp:convergence}, considering the further requirement on the denoiser.

\begin{prp}{
		\label{prp:convpnp} Let $\den$ a firmly non expansive Gaussian denoiser, that is the resolvent of a Maximally Monotone Operator $A$. Set $\mathcal{D}_{\gamma\beta}=\den$ in \cref{al:pnpsplit+}. For any $\vx^0, \vw^0$ and for any $\gamma\in\rd^+$ the sequences $\{\vlambdak\}_k$ and $\{\vwk\}_k$ generated by \pnp converge. The sequence $\{\vxk\}_k$ computed in \pnp converges to $\tilde\vx$ such that
		$$
		0\in\vH^\top\nabla KL(\vH\tilde\vx+b;\vg) + A(\tilde\vx) + N_{\rd_+^n}(\tilde \vx),
		$$ 
		where $N_{\rd_+^n}$ is the normal cone to $\rd^n_+$, if one of the following conditions is met:
		\begin{enumerate}[label=\roman*)]
			\item The primal problem has one and only one solution
			\item The optimization problem
			\begin{equation}\label{eq:mineProp}
				\argmin{\vx} \psi(\vx) + \frac{1}{2\gamma}\|\vM\vx-\hat\vw +\hat\vlambda \|_2^2, \quad 
				\hat\vw = \lim_{k\to\infty} \vwk, \quad \hat\vlambda = \lim_{k\to\infty} \vlambdak 
			\end{equation}
		\end{enumerate}
		has an unique solution.}
\end{prp}

\begin{proof}{
		Since $\den$ is firmly non expansive, the update step in $\vw_2$ amounts to compute the resolvant operator of a maximally monotone operator $A$:
		$$
		\kkiter{\vw}_2 = (A+\Id)^{-1}(\kkiter{\vx} +\vlambdak_2).
		$$
		The updates for $\kkiter{\vw}_1$ and $\kkiter{\vw_3}$ are the proximal operators of the KL and the projection on the non negative orthant, respectively: therefore, the whole update for $\kkiter{\vw}$ allows recasting the convergence proof of \pnp into the classical one of the ADMM. Moreover, from the optimality condition one has
		\begin{eqnarray*}
			0 &=& \vM^\top \lp\vM\tilde\vx-\tilde\vw+\tilde\vlambda\rp\\
			0&\in&\nabla KL(\vw_1;\vg) + A(\vw_2) + N_{\rd_+^n}(\vw_3) - \frac1\gamma\lp\vM\tilde\vx-\tilde\vw+\vlambda\rp\\
			0&=&\vM\tilde\vx-\tilde\vw
		\end{eqnarray*}
		then, plugging in the constraint $0=\vM\tilde\vx-\tilde\vw$
		\begin{eqnarray*}
			0 &=& \vM^\top \tilde\vlambda\\
			0&\in&\nabla KL(\vH\tilde\vx+b;\vg) + A(\tilde\vx) + N_{\rd_+^n}(\tilde\vx )-\gamma^{-1}\tilde\vlambda,
		\end{eqnarray*}
		leading thus to 
		$$
		0\in\vH^\top\nabla KL(\vH\tilde\vx+b;\vg) + A(\tilde\vx) + N_{\rd_+^n}(\tilde\vx ).
		$$
		Moreover, as already stated for the PIDSplit+ algorithm, the update step for $\vxkk$ consists of solving a square linear system whose matrix is non singular, therefore the solution is unique: this amounts to satisfy item \textit{ii)}.}
\end{proof}

\begin{remark}{
		The proof of \cref{prp:convpnp} regards the case of firmly non expansive denoisers. If one manages to employ a $\den$ such that it is an actual proximal operator, \emph{i.e.}, $\den=\prox_\rho$ for some unknown $\rho$, then the convergence follows directly from \cref{prp:convpnp}. In this case, however, the limit point $\tilde\vx$ solves a primal problem whose objective functional is unknown, so that the objective functional of the  corresponding primal problem is  $KL(\vH\cdot;\vg) + \rho(\cdot) + \iota_{\rd_+^n}$.}
\end{remark}

\begin{remark}
	\label{rem:adaptiveGamma}{
		The performance of ADMM is related to the choice of the parameter $\gamma$: the literature \cite{He2000,Wang2001,Zanni2015} presents an adaptive strategy to overcome this issue. Such strategy relies on two quantities, namely the primal and dual residuals:
		\begin{equation}
			\label{eq:pdRes}
			\begin{split}
				\vp^k &= \vM\vxk-\vwk - b\\
				\vs^k &= \frac{1}{\gamma} \vM^t\lp\vwk-{\vw}^{k-1}\rp.
			\end{split}
		\end{equation}
		These two quantities provides insights on the upper bound on the absolute error among the objective function and its minimum value at the current iterate \cite{Zanni2015}. These residuals are employed to design an adaptive strategy for selecting the value for $\gamma$, and the convergence of ADMM is assured provided that $\gamma$ stabilizes after a fixed number of iteration. This strategy is investigated in \cref{ssec:gamma}.}
\end{remark}

\section{Numerical Experiments}
\label{sec:numexp}
This section is devoted to assess the performance of the proposed \pnp method. All the experiments have been carried out on a MacBook Pro equipped with M4 processors, in PyThorch environment. The code is available at \url{https://github.com/AleBenfe/PnPSplitPlus}.

The images employed for the experiments belong to the Set5 dataset \cite{bevilacqua2012low} {and BSD500 \cite{5557884}}. Each image is scaled to $[0,1]$, the Poisson noise has been {simulated} using a custom function, implemented via \verb"torch" library functions, which allows to select  the level $\nu$ of the noise affecting the image: the lower the value of $\nu$, the higher the level of the noise. The blurring operation is carried out via FFT.
\begin{figure}[htbp]
	\centering
	\includegraphics[width=.95\textwidth]{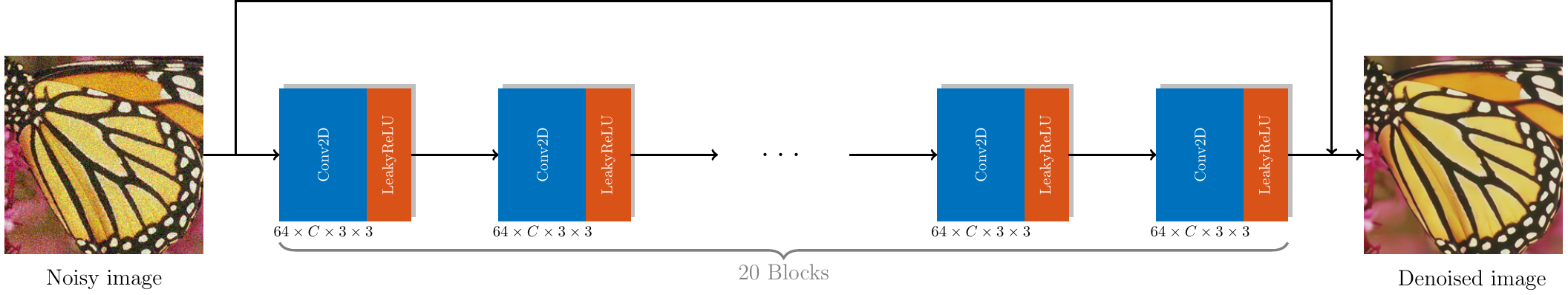}
	\caption{Sketch of the network used in the experiments. The network consists of 20 blocks of convolutional and LeakyReLU layers, with a skip connection between the input and the output. The number of channels is denoted by $C$: $C=3$ for RGB images, $C=1$ for black\&white images.}
	\label{fig:PesquetNet}
\end{figure}
The network employed as Gaussian denoiser in the update of the variable $\vw_2$ in \cref{al:pnpsplit+} is the {pretrained} deep convolutional network trained in \cite{doi:10.1137/20M1387961}{, available at \url{https://github.com/basp-group/PnP-MMO-imaging}, and depicted in \cref{fig:PesquetNet}. This network is inspired by the one in \cite{7839189}: it uses classical convolutional layers and the ReLU layers are replaced by LeakyReLU ones, while the Batch Normalization layers have been removed and a skip connection links the input with the output. This network has been trained by the original authors in two stages:}
\begin{itemize}
	\item {A pretraining is carried out on 50 000 images of the ImageNet dataset \cite{5206848}, using the Adam Algorithm,  and perturbing each minibatch of images in the following way:
		$$
		\vy_i=\vx_i + \sigma_i\varepsilon, \quad \varepsilon\sim\mathcal{N}(0,1), \, \sigma_i\sim \mathcal{U}[0,0.1].
		$$
		Therefore, the network is trained to perform \emph{blind denoising}. For this initial training, Jacobian regularization is not encompassed in the loss function \eqref{eq:trainingPesquet}, that is $\lambda=0$.}
	
	\item { Once this pretraining is accomplished, a refinement training step inserting Jacobian regularization in \eqref{eq:trainingPesquet}, considering  \cref{rem:spectralnorm}, is done on images of the BSD500 dataset \cite{937655}, perturbing them with Gaussian noise with a fixed standard deviation equal to 0.01. The resulting network is used in the experiments of this work.}
\end{itemize}

Four different measures are employed to assess the performances of the proposed method: the Mean Square Error (MSE), the relative error (RE) computed as $\|\vx^\star-\vx^{rec}\|/\|\vx^\star\|$, where $\vx^{rec}$ is the recovered image, the Peak Signal-to-Noise Ratio and the Structural Similarity Index (SSIM) \cite{Wang2004}. These indexes are computed on the last iterate $\vw_3^K$: at convergence, the iterates $\vw_2^K, \vw_3^K$ and $\vx^K$ should coincide, due to the constraints $\vM\vx=\vw$. {The initial iterate $\vx^0$ is set equal to $\vg$, and all the other variables accordingly: this setting is used among all the numerical experiments. The background $b$ is set to $10^{-3}$.}

\subsection{On the choice of $\beta\gamma$}
\label{ssec:betagamma}

{\Cref{al:pnpsplit+} requires using a denoiser that accounts for the standard deviation $\beta\gamma$ of the Gaussian noise on the current iterate. The parameter $\beta$ is actually encompassed by the unknown operator $A$ in \cref{prp:convpnp}, hence the parameter $\gamma$ plays the role of the standard deviation of the noise. In the numerical experiments presented here, the network employed as a denoiser has been trained on images affected by Gaussian noise with $\sigma=0.01$: therefore, the choice for the ADMM parameter is set as $\gamma=\sigma=0.01$.}

\subsection{Comparison with State-of-the-Art Algorithms}
\label{ssec:comparison}
This section is devoted to the comparison with state of the art algorithms. The first run of experiments is carried out for the comparison with the B-PnP algorithm, \cite{hurault2023convergent}, employing the code provided by the authors in the GitHub repository \url{https://github.com/samuro95/BregmanPnP}. Some slight modifications to the original code has been done, in order to run it on the same Apple machine and to have the same Poisson noise generator (\verb"torch.poisson" instead of \verb"numpy.random.poisson"). The comparison has been carried out on high level Poisson noise ($\nu=20$), and the images are blurred with a Gaussian PSF with $\sigma=1$. {Both algorithms are tested on two different settings, with the maximum number of iterations set to 2500 and 400}. B-PnP uses the PGD algorithm as inner solver.
\begin{figure}[htbp]
	\centering
	\newcommand\factor{0.24}
	\subfigure[$\vx^\star$, \emph{Butterfly} ]{\includegraphics[width=\factor\textwidth]{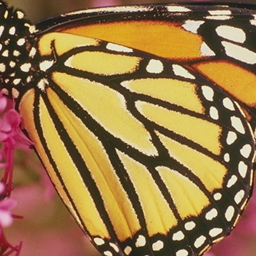}}\hfill\subfigure[ $\vg$, \emph{Butterfly} ]{\includegraphics[width=\factor\textwidth]{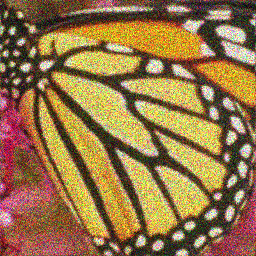}}\hfill\subfigure[B-PnP]{\includegraphics[width=\factor\textwidth]{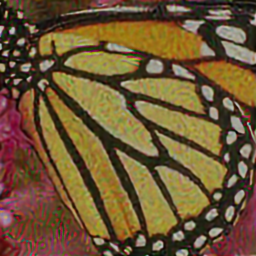}}\hfill\subfigure[\pnp]{\includegraphics[width=\factor\textwidth]{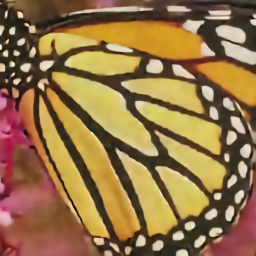}}
	\subfigure[$\vx^\star$, \emph{Tucano}]{\includegraphics[width=\factor\textwidth]{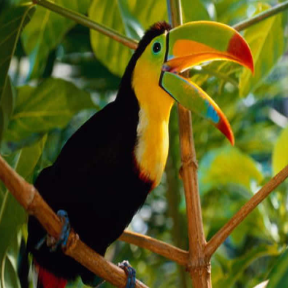}}\hfill\subfigure[$\vg$, \emph{Tucano}]{\includegraphics[width=\factor\textwidth]{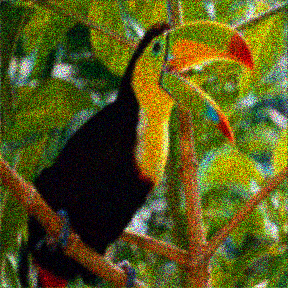}}\hfill\subfigure[B-PnP]{\includegraphics[width=\factor\textwidth]{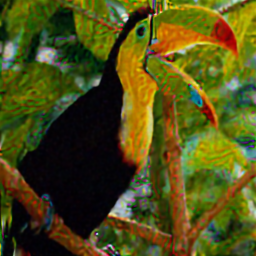}}\hfill\subfigure[\pnp]{\includegraphics[width=\factor\textwidth]{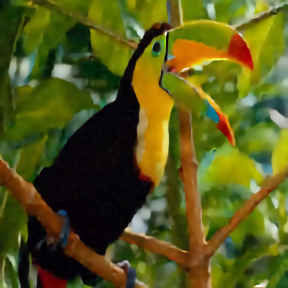}}
	\subfigure[ $\vx^\star$, \emph{Baby}]{\includegraphics[width=\factor\textwidth]{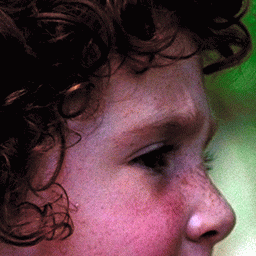}}\hfill\subfigure[ \emph{Baby}]{\includegraphics[width=\factor\textwidth]{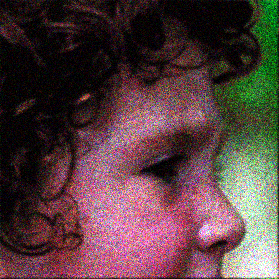}}\hfill\subfigure[B-PnP]{\includegraphics[width=\factor\textwidth]{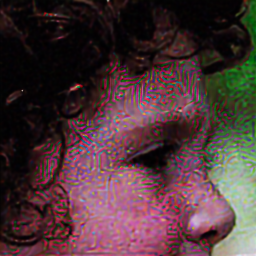}}\hfill\subfigure[\pnp]{\includegraphics[width=\factor\textwidth]{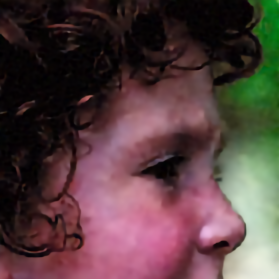}}
	\caption{Visual inspection of the recovered images provided by \pnp and B-PnP algorithms. First column: ground truth images. Second column: simulated recorded data, perturbed with a Gaussian PSF and Poisson noise at level 20. Third column: B-PnP reconstruction. Fourth column: \pnp reconstruction. Both algorithms have run for 2500 iterations. The B-PnP reconstructions suffer from the presence of some artifacts, while \pnp ones presents more smooth results. }
	\label{fig:comp}
\end{figure}

{\cref{tab:comparison} provides the performances indexes on the PSNR, MSE and SSIM. When the maximum number of iteration is reduced to 400 ({reflecting} the {default} setting for B-PnP) the methods provide similar results, in particular B-PnP improves both in terms of visual inspection and of indexes measure.  Nonetheless, \pnp achieves higher scores and shows a robust behaviour with respect to the number of iterations.} 
\cref{fig:comp} presents a visual inspection of the recovered images {obtained with 2500 iterations}: the {ones} provided by B-PnP method suffer from the presence of several artefacts, and in the case of the \textit{Butterfly} the image also from some kind of darkening effect. 
\begin{table}[htbp]
	\centering
	\begin{tabular}{l|c|c||c|c}
		& \multicolumn{2}{c||}{2500} & \multicolumn{2}{c}{400}\\
		\midrule
		& \pnp & B-PnP & \pnp & B-PnP \\
		\toprule
		& \multicolumn{4}{|c}{PSNR} \\
		\midrule
		
		\textit{Butterfly} 	& 25.18 & 22.14 & 25.07 & 23.43 \\
		\textit{Tucano} 	& 29.02 & 24.86 & 29.09 & 28.24 \\
		\textit{Baby} 		& 28.58 & 21.22 & 28.21 & 25.95 \\
		\midrule
		& \multicolumn{4}{|c}{MSE}\\ 
		\midrule
		\textit{Butterfly}	& 0.0030 & 0.0247 & 0.0032 & 0.0045 \\
		\textit{Tucano}		& 0.0012 & 0.0043 & 0.0013 & 0.0015 \\
		\textit{Baby} 		& 0.0014 & 0.0090 & 0.0015 & 0.0025 \\ 
		\midrule
		&\multicolumn{4}{|c}{SSIM} \\
		\midrule
		\textit{Butterfly} 	& 0.8518 & 0.6464 & 0.8372 & 0.7867 \\
		\textit{Tucano} 	& 0.8578 & 0.7090 & 0.8539 & 0.8246 \\
		\textit{Baby} 		& 0.6844 & 0.4887 & 0.6789 & 0.6362 \\
	\end{tabular}
	\caption{Comparison with B-PnP algorithm. Three different images have been considered, namely \textit{Butterfly}, \textit{Tucano} and \textit{Baby}. The proposed algorithm provides reliable performance measures; the B-PnP algorithm achieves better results when the maximum number of iterations is fixed to {400, as per default setting. The proposed method shows robustness with respect to the number of iterations.}}
	\label{tab:comparison}
\end{table}
{A further comparison with the B-PnP method is carried out on 20 images of the BSD500 dataset \cite{5557884}, the results are depicted in \cref{fig:BSDS500}. These plots show the behaviour of the PSNR among the 20 images: for each image 3 different runs have been considered, computing the average value and standard deviation for the PSNR, in order to avoid that a particular noise realization affects the performance evaluation. The shaded curved area represents the average PSNR $\pm$ the standard deviation. Two experiments were performed. In \cref{sfig:BSDS500_1000}, the maximum number of iterations was set to 1000 for both methods, whereas in \cref{sfig:BSDS500_1000}, it was set to 400.. \pnp is again more robust to the choice of maximum number of iterations, and B-PnP performs better when it is run for a low number of iterations, as presented in the original paper \cite{hurault2023convergent}. The computational time of \pnp method is lower than the one of B-PnP: the latter takes on average 7 seconds to run for 400 iterations, whilst B-PnP needs around 24 seconds. The difference increases for a larger number of iterations.} 
\begin{figure}[htbp]
	\begin{center}
		\subfigure[Max iters: 1000\label{sfig:BSDS500_1000}]{\includegraphics[width=0.45\textwidth]{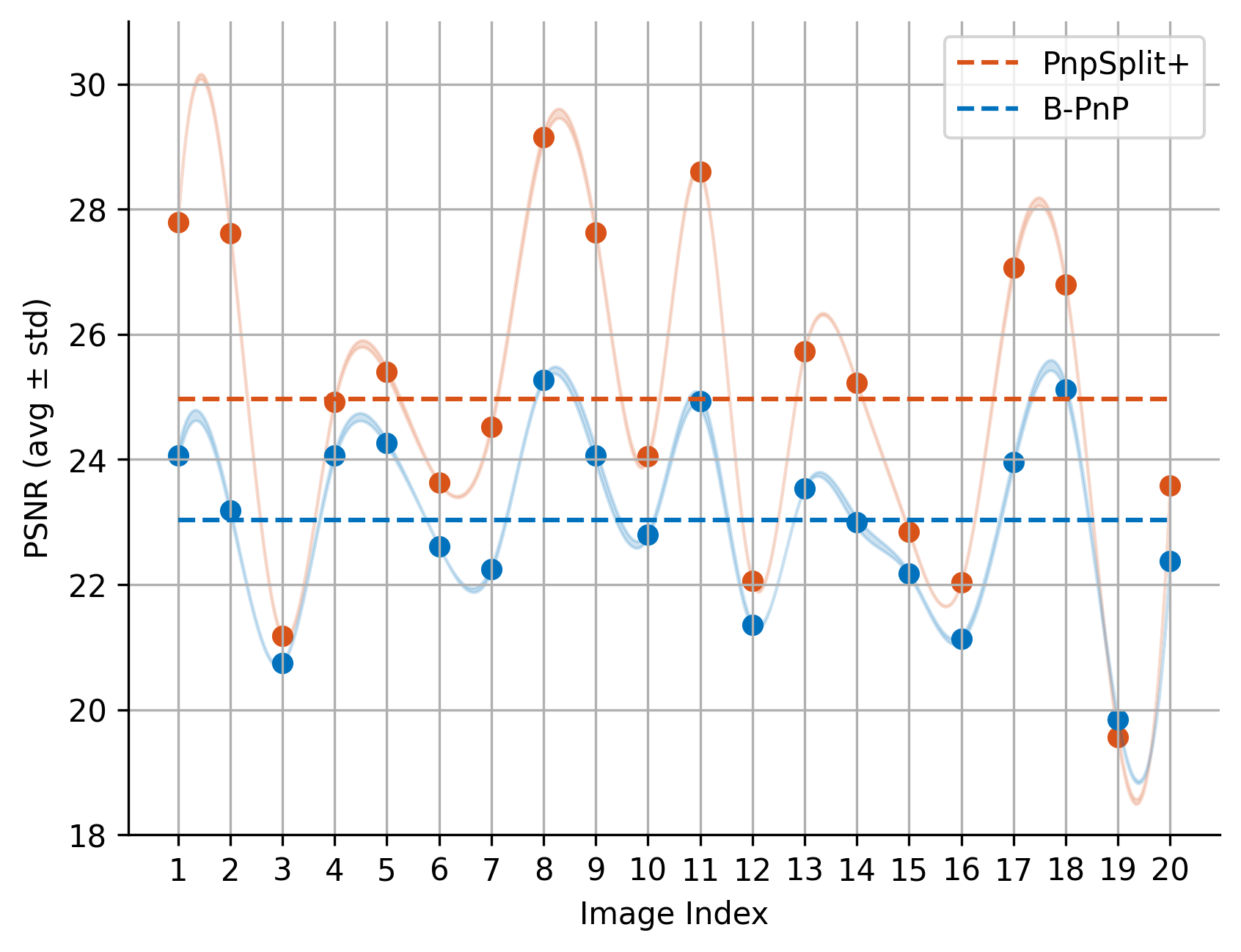}}\hfill\subfigure[Max Iters: 400\label{sfig:BSDS500_400}]{\includegraphics[width=0.45\textwidth]{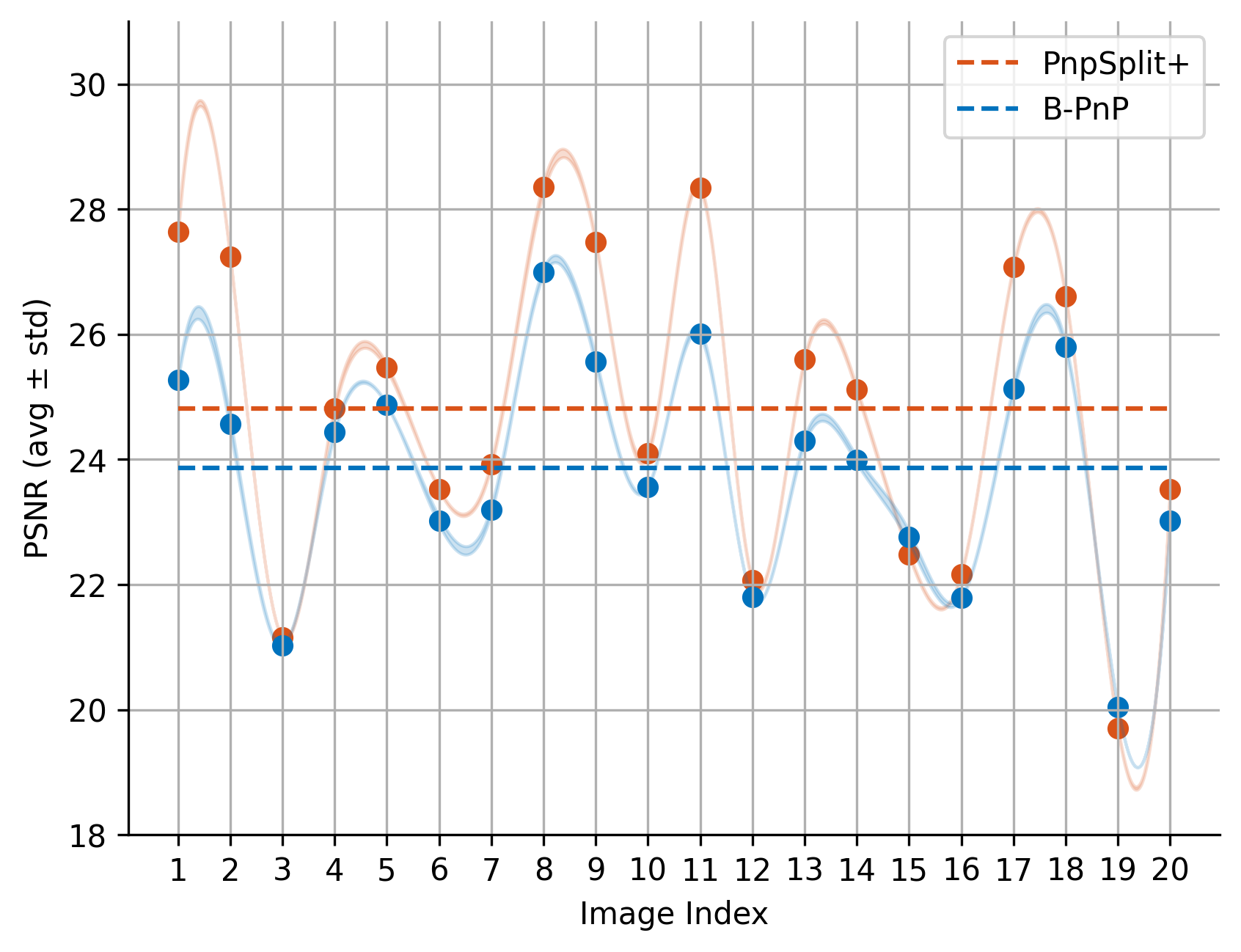}}
		\caption{PSNR assessment on 20 images of the BSDS500 dataset. The panels shows the results for \pnp and B-PnP when the maximum number of iteration is set to 1000 (left) and to 400 (right). The dots represent the average PSNR of the recovered images for each method. \pnp reveals to be quite robust with respect to the maximum number of iterations, and even when B-PnP runs with the optimal number of iterations \pnp is competitive. The curved behaviour is due to the  cubic spline used for plotting the results. }
		\label{fig:BSDS500}
	\end{center}
\end{figure}

The {third} run of experiments is done for comparing the \pnp with two other approaches: QAB-PnP \cite{9616253} and P$^4$IP \cite{ROND201696}. The test images employed in these experiments are modifications of the original ones, due to the memory constraints posed by the available MatLab code for QAB-PnP: the images are halved in both dimensions and transformed in gray scale images. The PSF inducing the blur is still a Gaussian one with $\sigma=1$ and the noise level is set to 20. The denoiser used in \cref{al:pnpsplit+} is {again the one} from \cite{doi:10.1137/20M1387961} with the appropriate number of input channels. Algorithm QAB-PnP run{s} on MatLab with no parallel implementation, the code is available at \url{https://github.com/SayantanDutta95/QAB-PnP-ADMM-Deconvolution}, while the Python code for P$^4$IP can be downloaded at \url{https://github.com/sanghviyashiitb/poisson-plug-and-play/tree/main}. 
\begin{table}[htbp]
	\centering
	\begin{tabular}{l|ccc|ccc|ccc}
		& \multicolumn{3}{c|}{\pnp} &\multicolumn{3}{c|}{QAB-PnP} &\multicolumn{3}{c}{P$^4$IP}\\
		\midrule
		&\textit{Butterfly} & \textit{Tucano}& \textit{Baby}&\textit{Butterfly} & \textit{Tucano}& \textit{Baby}&\textit{Butterfly} & \textit{Tucano}& \textit{Baby}\\ 
		\midrule
		MSE 	& 0.0057 & 0.0026 & 0.0012 & 0.1245 & 0.0621 & 0.0413 & 0.2213 & 0.1255 & 0.0862\\
		RE 		& 0.1526 & 0.1648 & 0.0937 & 0.2202 & 0.1902 & 0.1086 & 0.9512 & 0.9478 & 0.9478\\
		PSNR 	& 22.444 & 25.838 & 29.110 & 18.094 & 24.135 & 27.689 & 6.5497 & 9.012  & 10.640\\
		SSIM 	& 0.8104 & 0.7487 & 0.8006 & 0.5312 & 0.6551 & 0.7441 & 0.0262 & 0.1193 & 0.0641\\
	\end{tabular}
	\caption{Performances of \pnp, QAB-PnP and P$^4$IP algorithms on gray scale images corrupted by a Gaussian PSF with $\sigma=1$ and $\nu= 20$. \pnp provides better results than QAB-PnP. P$^4$IP instead does not reach reliable results, and suffers particularly from the presence of noise.}
	\label{tab:BW}
\end{table}
\cref{tab:BW} presents the numerical assessment of the performance of the three algorithms.  \cref{fig:BW} shows the recorded data $\vg$ for the three images, together with the recovered images achieved by the three different algorithms. The effect of the PSF is significant, given the images' dimension, and the noise level is rather high. The {restored images} achieved by QAB-PnP present several artifacts, while the ones provided by PnPSplt+ suffer from the loss of details, mainly in \textit{Baby} cases. P$^4$IP failed to recover reliable {images}: the images shown in \cref{fig:BW} related to the result of P$^4$IP are rescaled in order to make them visible, since the reached maximum value is around 0.04 in all three cases.
\begin{figure}[htbp]
	\centering
	\newcommand\factor{0.24}
	\subfigure[$\vg$, \textit{Butterfly}]{\includegraphics[width=\factor\textwidth]{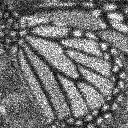}}\hfill\subfigure[\pnp]{\includegraphics[width=\factor\textwidth]{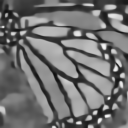}}\hfill\subfigure[QAB]{\includegraphics[width=\factor\textwidth]{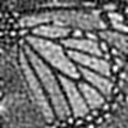}}\hfill\subfigure[P$^4$IP]{\includegraphics[width=\factor\textwidth]{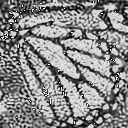}}
	
	\subfigure[$\vg$, \textit{Tucano}]{\includegraphics[width=\factor\textwidth]{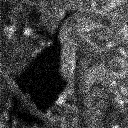}}\hfill\subfigure[\pnp]{\includegraphics[width=\factor\textwidth]{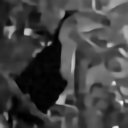}}\hfill\subfigure[QAB]{\includegraphics[width=\factor\textwidth]{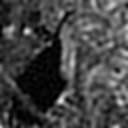}}\hfill\subfigure[P$^4$IP]{\includegraphics[width=\factor\textwidth]{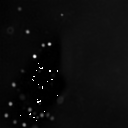}}
	
	\caption{Comparinson on the reconstruction achieved by \pnp, QAB-PNP and P$^4$IP, respectively on the second, third and fourth column. The first column shows the {corrupted} data $\vg$. The results of P$^4$IP are shown in a different scale: while \pnp and QAB provide reconstructions in [0,1], P$^4$IP failed to recover images with values higher than 0.04 in all cases.}
	\label{fig:BW}
\end{figure}

\subsection{Severely Corrupted Images}
The following set of experiment is devoted to assess the performance of the \pnp Algorithm in presence of high noise level or severe blur induced by the PSF. 
\begin{figure}[htbp]
	\centering
	\newcommand{\factor}{0.23}
	\subfigure[$\vg$, $\nu=10$]{\includegraphics[width=\factor\textwidth]{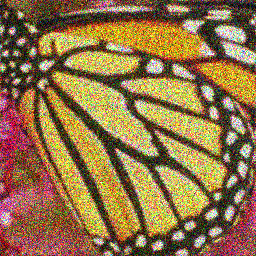}}\hfill\subfigure[Recovered image]{\includegraphics[width=\factor\textwidth]{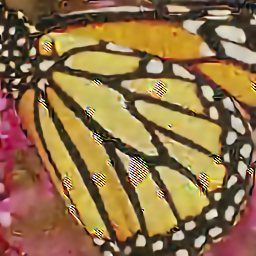}}\hfill
	\subfigure[$\vg$, $\sigma=2.5$]{\includegraphics[width=\factor\textwidth]{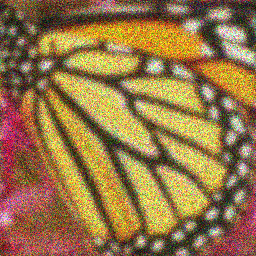}}\hfill\subfigure[Recovered image]{\includegraphics[width=\factor\textwidth]{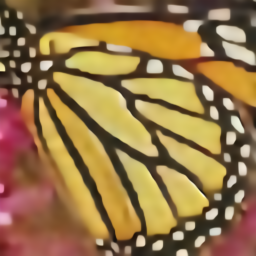}}
	
	\subfigure[$\vg$, $\nu=10$]{\includegraphics[width=\factor\textwidth]{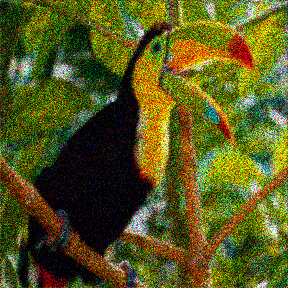}}\hfill
	\subfigure[Recovered image]{\includegraphics[width=\factor\textwidth]{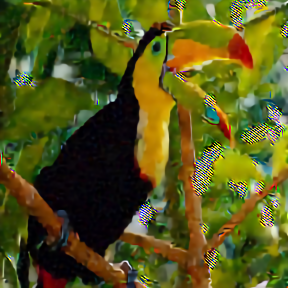}}\hfill\subfigure[$\vg$ $\sigma=2.5$]{\includegraphics[width=\factor\textwidth]{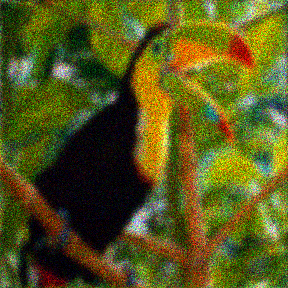}}\hfill\subfigure[Recovered image]{\includegraphics[width=\factor\textwidth]{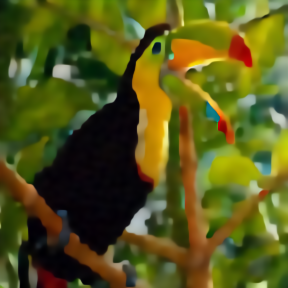}}
	
	\subfigure[$\vg$, $\nu=10$]{\includegraphics[width=\factor\textwidth]{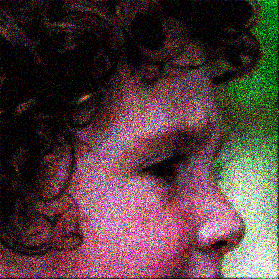}}\hfill\subfigure[Recovered image]{\includegraphics[width=\factor\textwidth]{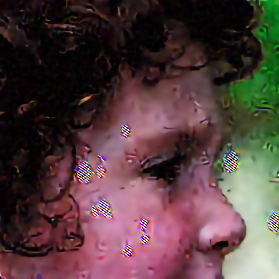}}\hfill\subfigure[$\vg$, $\sigma=2.5$]{\includegraphics[width=\factor\textwidth]{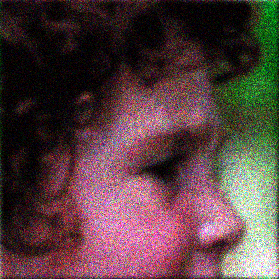}}\hfill\subfigure[Recovered image]{\includegraphics[width=\factor\textwidth]{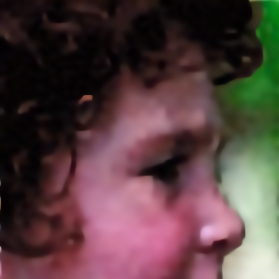}}
	
	\caption{image results when the perturbation on the recorded data is particularly strong, in terms of noise level or blurring. First row: reconstructions obtained for a PSF with $\sigma=1$ and noise level set to 5. Second row: reconstructions obtained for a PSF with $\sigma=2.5$ and noise level set to 20.}\label{fig:highlycorrputed}
\end{figure}
\cref{tab:corrupted} (upper part) presents the numerical performance of \pnp when the Poisson Noise level $\nu$ is increased to 15 and 10. As one expects, the higher the noise level the worst the performances, but nonetheless the achieved results present rather high scores: in particular, the PSNR of the recovered images reaches satisfying levels. \cref{tab:corrupted} (lower part) shows the four scores achieved when  large Gaussian PSF ($\sigma=2$ and $\sigma=2.5$) are used to blur the images, with $\nu=20$. The quality of the restoration is reliable, although in this case the information loss induced by the blurring is too high to retrieve pleasant images to the human eyes.
\begin{table}[htbp]
	\centering
	\begin{tabular}{l|ccc||ccc}
		& \textit{Butterfly} & \textit{Tucano} & \textit{Baby}&\textit{Butterfly} & \textit{Tucano} & \textit{Baby}\\
		\cmidrule{2-7}
		&\multicolumn{3}{|c||}{$\nu=15$}&\multicolumn{3}{|c}{$\nu=10$}\\
		\midrule
		MSE 	& 0.0036 & 0.0016 & 0.0018 & 0.0088 & 0.0096 & 0.0049 \\
		RE 		& 0.1135 & 0.1135 & 0.1110 & 0.1773 & 0.2787 & 0.1818 \\
		PSNR  	& 24.425 & 27.987 & 27.380 & 20.555 & 20.185 & 23.095 \\
		SSIM 	& 0.8369 & 0.8342 & 0.6717 & 0.7533 & 0.7020 & 0.6025 \\
		\midrule\midrule
		&\multicolumn{3}{|c||}{$\sigma=2$}&\multicolumn{3}{|c}{$\sigma=2.5$}\\
		\midrule
		MSE 	& 0.0062 & 0.0021 & 0.0018 & 0.0078 & 0.0028 & 0.0020\\
		RE 		& 0.1493 & 0.1316 & 0.1096 & 0.1671 & 0.1494 & 0.1150\\
		PSNR 	& 22.047 & 26.706 & 27.488 & 21.071 & 25.601 & 27.077\\
		SSIM 	& 0.7588 & 0.7907 & 0.6332 & 0.7172 & 0.7562 & 0.6160\\
		
	\end{tabular}
	\caption{Results achieved by \pnp when the Gaussian PSF induces a larger blur and for high level noise. The loss of information is relatively high, but the index measures are still reliable. }
	\label{tab:corrupted}
\end{table}
\cref{fig:highlycorrputed} presents the recovered images when the noise level is set to 5 and when the PSF inducing the blurring is large ($\sigma=2.5$) for \textit{Butterfly}, \textit{Tucano} and \textit{Baby} images on the first, second and third row, respectively. As one expects, the recovered images present several artefacts, mainly when recovering in presence of high noise, but even in these extreme cases \cref{al:pnpsplit+} manages to recover most of the information.

\subsection{Other Blurring Operators}
\label{ssec:blurs}{
	The proposed \pnp approach is tested on other blur operators, namely 
	\begin{itemize}
		\item Motion blur, with length 11 and slope 35$^\circ$,
		\item Out-of-Focus blur, with radius 5,
		\item Mean Filter blur, with dimension equal to 3.
	\end{itemize}
	The corrupted data $\vg$ is computed using the linear model \eqref{eq:linMod} employing one of the above operators for $\vH$, and the noise level is set again to 20. The parameter $\gamma$ is set to 0.01 and the method is run for 1000 iterations. The corrupted data and the relative restorations are depicted in \cref{fig:otherBlurs}, together with the numerical performance metrics. \pnp provides reliable results under different types of blurring operators.}
\begin{figure}[htbp]
	\centering
	\newcommand{\factor}{0.25}
	\subfigure[Motion Blur]{\includegraphics[width=\factor\textwidth]{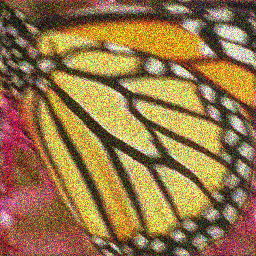}} \hfill \subfigure[Out-of-Focus Blur]{\includegraphics[width=\factor\textwidth]{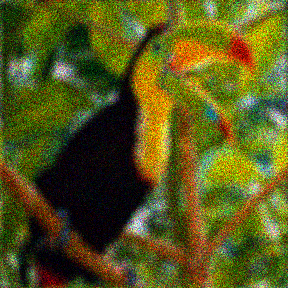}}\hfill\subfigure[Mean Filter Blur]{\includegraphics[width=\factor\textwidth]{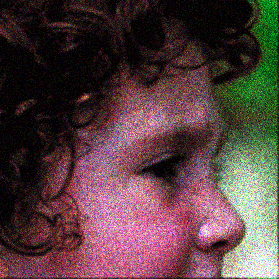}}
	
	\subfigure[PSNR 23.052]{\includegraphics[width=\factor\textwidth]{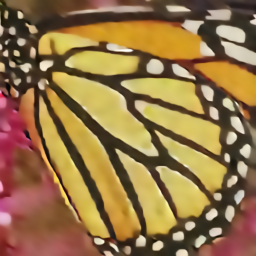}}\hfill \subfigure[PSNR 25.283 ]{\includegraphics[width=\factor\textwidth]{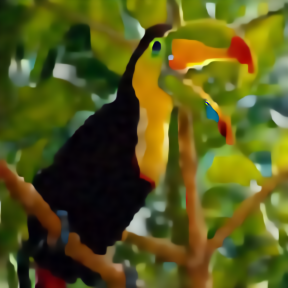}} \hfill\subfigure[PSNR 28.626 ]{\includegraphics[width=\factor\textwidth]{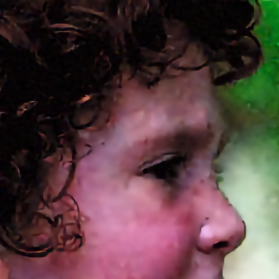}}
	\caption{Result of PnPSplit$^+$ for Motion, Out-of-Focus and Mean filter blurs. Top row: corrupted images; bottom row: recovered images with relative PSNR.}\label{fig:otherBlurs}
\end{figure} 

\subsection{Adaptive strategy for $\gamma$}
\label{ssec:gamma}

{As observed in \cref{rem:adaptiveGamma}, ADMM performance could be particularly dependent on the choice of $\gamma$. The literature presents several adaptive strategies to overcome this issue: this works employs the one depicted in \cite{He2000,Wang2001,Zanni2015},} and \cref{al:pnpsplit+} can be modified inserting the following $\gamma$-scheduler after the update of the Lagrangian parameters.
\begin{equation}
	\label{eq:gammaSchedule}
	\gamma^{k+1} = \begin{cases}
		\ds\frac{\alpha}{\gamma^k} & \text{if } \|\vp^k\|>\mu\|\vd^k\|, \,k\leq k_{max}\\\\
		\ds{\alpha}{\gamma^k} & \text{if } \|\vd^k\|>\mu\|\vp^k\|, \,k\leq k_{max}\\\\
		\gamma^k & \text{otherwise}
	\end{cases}
\end{equation}
where $\alpha$ and $\mu$ are positive values greater than 1. A first glance, it seems that the number of parameters to set rises from one to four: actually, $\alpha$ and $\mu$ can be set really close to 1 and the only parameters to set remain $\gamma^0$ and $k_{max}$. A first experiment is carried out for testing the relevance of the adaptive strategy for $\gamma$, and how the initial parameter influences \cref{al:pnpsplit+} results.

Two images, namely \textit{Butterfly} and \textit{Tucano}, are employed for this test: each one is blurred with a Gaussian PSF with standard deviation $\sigma=1$, and corrupted with Poisson noise at level $\nu=20$. For each choice $\gamma^0$, \cref{al:pnpsplit+} is run also with $\gamma^k=\gamma^0$ for any $k$. The maximum number $K$ of iteration is set to 2500, $\alpha=\mu=1.001$ and $k_{max}=1250$. 
\begin{table}[htbp]
	\centering
	\begin{tabular}{l|cc|cc|cc|cc}
		& \multicolumn{8}{c}{\textit{Butterfly}}\\
		\toprule
		& $\gamma^0:1$ & Adapt & $\gamma^0:10$ & Adapt & $\gamma^0:10^{2}$ &  Adapt & $\gamma^0:10^{3}$ & Adapt \\
		\midrule
		MSE 	& 0.1132 & 0.1129 & 0.1272 & 0.0944 & 0.1293 & 0.0249 & 0.1295 & 0.0035\\
		RE 		& 0.6360 & 0.6349 & 0.6739 & 0.5806 & 0.6795 & 0.2982 & 0.6800 & 0.1121\\
		PSNR	& 9.461  & 9.475  & 8.957  & 10.252 & 8.886  & 16.039 & 8.8879 & 24.538\\
		SSIM	& 0.1686 & 0.1690 & 0.1546 & 0.1902 & 0.1538 & 0.3941 & 0.1528 & 0.8328\\
		\midrule
		& $\gamma^0:10^{-4}$ & Adapt & $\gamma^0:10^{-3}$ & Adapt & $\gamma^0:10^{-2}$ &  Adapt & $\gamma^0:10^{-1}$ & Adapt \\
		\midrule
		MSE 	& 0.0150 & 0.0127 & 0.0086 & 0.0047 & 0.0031 & 0.0072 & 0.0755 & 0.0871\\
		RE 		& 0.2312 & 0.2132 & 0.1754 & 0.1295 & 0.1046 & 0.1604 & 0.5193 & 0.5577\\
		PSNR	& 18.248 & 18.953 & 20.648 & 23.284 & 25.137 & 21.425 & 11.220 & 10.600\\
		SSIM	& 0.6302 & 0.6609 & 0.7267 & 0.8024 & 0.8526 & 0.6629 & 0.2263 & 0.2026\\
		\midrule
		\midrule
		& \multicolumn{8}{c}{\textit{Tucano}}\\
		\toprule
		
		& $\gamma^0:10^{-4}$ & Adapt & $\gamma^0:10^{-3}$ & Adapt & $\gamma^0:10^{-2}$ &  Adapt & $\gamma^0:10^{-1}$ & Adapt \\
		\midrule
		MSE 	& 0.0098 & 0.0078 & 0.0047 & 0.0020 & 0.0013 & 0.0048 & 0.0577 & 0.0693\\
		RE 		& 0.2812 & 0.2507 & 0.1947 & 0.1283 & 0.1017 & 0.1971 & 0.6842 & 0.7497\\
		PSNR	& 20.108 & 21.104 & 23.302 & 26.921 & 28.946 & 23.195 & 12.385 & 11.591\\
		SSIM	& 0.5378 & 0.6021 & 0.7043 & 0.8147 & 0.8582 & 0.6531 & 0.2509 & 0.1894\\
		\midrule
		& $\gamma^0:1$ & Adapt & $\gamma^0:10$ & Adapt & $\gamma^0:10^{2}$ &  Adapt & $\gamma^0:10^{3}$ & Adapt \\
		\midrule
		MSE 	& 0.0859 & 0.0884 & 0.0986 & 0.0726 & 0.1001 & 0.0160 & 0.1003 & 0.0014\\
		RE 		& 0.8344 & 0.8465 & 0.8939 & 0.7669 & 0.9008 & 0.3602 & 0.9016 & 0.1079\\
		PSNR	& 10.660 & 10.536 & 10.063 & 11.393 & 9.996  & 17.957 & 9.988  & 28.426\\
		SSIM	& 0.1424 & 0.1375 & 0.1272 & 0.1618 & 0.1411 & 0.4119 & 0.1244 & 0.8478\\

	\end{tabular}
	\caption{Evaluation of fixed versus adaptive strategy. The column $\gamma^0$ denotes the value for $\gamma$ selected as initial one for the adaptive strategy (Adapt column) and the constant used in the vanilla \pnp. The index measures of Mean Square Error, Relative Error, Peak Signal to Noise Ratio and Similarity Structure Index Measure are employed for the comparison. The adaptive strategy is particularly effective for high values of $\gamma$.} 
	\label{tab:adaptiveComp}
\end{table}
\cref{tab:adaptiveComp} shows the comparison result. {Considering a constant value for $\gamma$, the best choice is $\gamma=0.01$, which is exactly the parameter used for the noisy images employed for the training of $\den$ used in \cref{al:pnpsplit+}. One should note that the adaptive strategy when $\gamma^0=0.01$ fails, while in most all the other cases such strategy improves the quality of the restored images. These tests confirm that choosing the constant value $\gamma=0.01$ provides reliable performances under the usage of the denoiser presented in \cite{doi:10.1137/20M1387961}. If one employs a denoiser whose training is carried out on noisy images with different noise levels, then using the adaptive strategy could overcome possible issues: in \cref{ssec:nonconv} this approach shall reveal to provide remarkable results.}

\subsection{Performance without Convergence Guarantees}
\label{ssec:nonconv}
The last runs of experiments consists of the implementation of \cref{al:pnpsplit+} when the denoising network is not firmly non expansive, \emph{i.e.} not abiding to the hypothesis that guarantees the convergence of the method. The network trained in \cite{doi:10.1137/20M1387961} is substituted by the classical deep convolutional  network presented in \cite{7839189}. Such network has been trained by minimizing the well--known MSE loss function, therefore without imposing any constraint that forces the non firmly expansiveness. 
\begin{figure}[htbp]
	\centering
	\newcommand\factor{0.25}
	\subfigure[\textit{Butterfly}]{\includegraphics[width=\factor\textwidth]{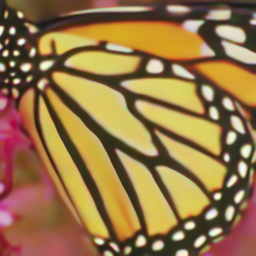}}\hfill\subfigure[\textit{Tucano}]{\includegraphics[width=\factor\textwidth]{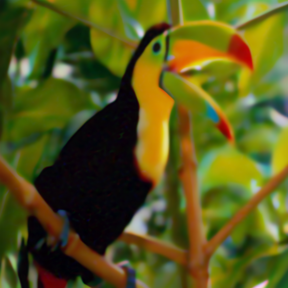}}\hfill\subfigure[\textit{Baby}]{\includegraphics[width=\factor\textwidth]{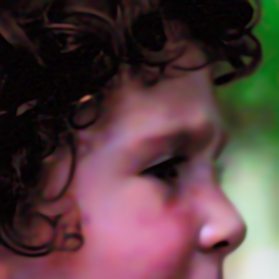}}
	\caption{Recovered images when the convergence guarantees are not met. The quality of these reconstructions is similar to the quality of the images obtained emploing a net satisfying the convergence guarantees, both in terms of visual inspection and performance measures.}
	\label{fig:noConvergence}
\end{figure}
The results are collected in \cref{fig:noConvergence} and the numerical performance is summed up in \cref{tab:noConvergence}, for the case in which the blur is induced by a Gaussian PSF with $\sigma=1$ and the noise level is set to 20. {Setting a fixed value for $\gamma$ does not yield to reliable results, likely due to the absence of convergence property. Therefore, these tests are run employing the adaptive strategy depicted in \cref{ssec:gamma},  setting  the initial value of $\gamma$ to 10, the maximum number of iterations to 1000 and $k_{max}=1000$ in \eqref{eq:gammaSchedule}}.
\begin{table}[htbp]
	\centering
	\begin{tabular}{l|ccc}
		& \textit{Butterfly} & \textit{Tucano} & \textit{Baby}\\
		\midrule
		MSE & 0.0050 & 0.0015 & 0.0015 \\
		RE & 0.1337 & 0.1113 & 0.1017 \\
		PSNR & 23.003 & 28.163 & 28.137 \\
		SSIM & 0.7894 & 0.8267 & 0.6528 \\
	\end{tabular}
	\caption{Numerical assessment of the {achieved results} when a net not satisfying the requirements of non {firmly} expansion is not met. The indexes values are slightly lower than the ones obtained in \cref{ssec:comparison}: this could be due to the denoising network.}\label{tab:noConvergence}
\end{table} The numerical experience shows that even employing a network that, at a first glance, does not assure the convergence of the method allows to achieve reliable results, both in terms of visual inspection and of measurement indexes. The latter ones do not achieve values close to the ones in \cref{tab:comparison}: this could be due to the quality of denoising ability of the network. 

\section{Conclusion}
\label{sec:conclusions}
This work presented a novel approach, named \pnp, for solving image restoration problems in presence of Poisson noise. The original idea of \cite{Setzer2012} is coupled with PnP strategy of substituting the proximal step on the regularization function with an off--the--shelf denoiser. In particular, for ensuring the convergence of the method a firmly non expansive denoiser has been employed in the \pnp scheme. The main contribution of this approach is to avoid the usage of an inner solver for the deblurring step, allowing the computation of solution to the inner problem via an explicit formula. This strategy showed remarkable performances, both in terms of quality measurements and computational time, in comparison to state of the art algorithms, and even in presence of high noise levels and when the blurring effect of the PSF is significant.

The results are really promising, but nonetheless there are still several aspects to explore and improve. On the one hand, {The adaptive strategy should be further investigated for both firmly non-expansive denoisers and those that do not satisfy convergence guarantees. In the latter case, the adaptive strategy may play a role in the convergence behavior of the method. Another aspect to be considered in future research is the employment of firmly non expansive blind denoising networks, that is networks trained for the noise removal with different standard deviations. A further research direction could consider the development of a suitable stopping criterion, aiming to avoid early stopping via the setting the of maximum number of iteration.} Finally, a further generalization of the proposed approach can be done in the direction of Proximal Gradient Descent Ascent methods.

\paragraph{Acknowledgments.}This research has been partially performed in the framework of the MIUR-PRIN Grant 20225STXSB "Sustainable Tomographic Imaging with Learning and Regularization", GNCS Project CUP E53C24001950001 "Metodi avanzati di ottimizzazione stocastica per la risoluzione di problemi inversi di imaging", and CARIPLO project "Project Data Science Approach for Carbon Farming Scenarios (DaSACaF)"CAR\_RIC25ABENF\_01.

\bibliographystyle{plain}
\bibliography{PoissPnP}
\end{document}